\newtheorem{theorem}{Theorem}[section]
\newtheorem{lemma}[theorem]{Lemma}
\newtheorem{conj}{Conjecture}
\newtheorem{prob}{Problem}
\theoremstyle{definition}
\newtheorem{defn}{Definition}[section]
\newtheorem{example}{Example}
\newcounter{alphabet}
\newcommand{\Aut}{{\operatorname{Aut}}}
\def\be{\begin{equation}}
\def\ee{\end{equation}}
\newcommand{\bee}{\begin{enumerate}}
\newcommand{\eee}{\end{enumerate}}
\newcommand{\blem}{\begin{lem}}
\newcommand{\elem}{\end{lem}}
\newcommand{\bthm}{\begin{thm}}
\newcommand{\ethm}{\end{thm}}
\newcommand{\bcor}{\begin{cor}}
\newcommand{\ecor}{\end{cor}}
\newcommand{\beg}{\begin{example}}
\newcommand{\eeg}{\end{example}}
\newcommand{\begs}{\begin{examples}}
\newcommand{\eegs}{\end{examples}}
\newcommand{\bdefe}{\begin{defn}}
\newcommand{\edefe}{\end{defn}}
\newcommand{\bprob}{\begin{prob}}
\newcommand{\eprob}{\end{prob}}
\newcommand{\bques}{\begin{ques}}
\newcommand{\eques}{\end{ques}}
\newcommand{\bei}{\begin{itemize}}
\newcommand{\eei}{\end{itemize}}
\newcommand{\bcon}{\begin{conj}}
\newcommand{\econ}{\end{conj}}
\newcommand{\bcons}{\begin{conjs}}
\newcommand{\econs}{\end{conjs}}
\newcommand{\bprop}{\begin{propo}}
\newcommand{\eprop}{\end{propo}}
\newcommand{\br}{\begin{rem}}
\newcommand{\er}{\end{rem}}
\newcommand{\brs}{\begin{rems}}
\newcommand{\ers}{\end{rems}}
\newcommand{\bo}{\begin{obser}}
\newcommand{\eo}{\end{obser}}
\newcommand{\bos}{\begin{obsers}}
\newcommand{\eos}{\end{obsers}}
\newcommand{\bpf}{\begin{proof}}
\newcommand{\epf}{\end{proof}}
\newcommand{\ba}{\begin{array}}
\newcommand{\ea}{\end{array}}
\newcommand{\beq}{\begin{align}}
\newcommand{\beqq}{\begin{align*}}
\newcommand{\eeq}{\end{align}}
\newcommand{\eeqq}{\end{align*}}
\newcounter{minutes}\setcounter{minutes}{\time}
\newcounter{hours}\setcounter{hours}{\time}
\begin{document}
\bibliographystyle{amsplain}

\title[]%
{The range of Hilbert operator and Derivative-Hilbert operator acting on $H^{\infty}$}

\author{Liyun Zhao Zhenyou Wang* and Zhirong Su}

\begin{abstract}
Let $\mu$ be a positive Borel measure on the interval $[0,1)$. The Hankel matrix
$\mathcal{H}_{\mu}=(\mu_{n,k})_{n,k\geq0}$
with entries $\mu_{n,k}=\mu_{n+k}$, where $\mu_n=\int_{[0,1)}t^{n}d\mu(t)$. For $f(z)=\sum_{n=0}^{\infty}a_nz^n$ is an analytic function in $\mathbb{D}$,
the Hilbert operator is defined by
$$\mathcal{H}_{\mu}(f)(z)=\sum_{n=0}^{\infty}\Bigg(\sum_{k=0}^{\infty}\mu_{n,k}a_k\Bigg)z^n,
\quad z\in \mathbb{D}.$$
The Derivative-Hilbert operator is defined as
$$\mathcal{DH}_{\mu}(f)(z)=\sum_{n=0}^{\infty}\Bigg(\sum_{k=0}^{\infty}\mu_{n,k}a_k\Bigg)(n+1)z^n,
\quad z\in \mathbb{D}.$$
In this paper, we determine the range of the Hilbert operator and Derivative-Hilbert operator acting on $H^{\infty}$.
\\\hspace*{\fill}\\
\textit{Keywords:}
Derivative-Hilbert operator, Hilbert operator, $H^{\infty}$, $\mathcal{Q}_p$ spaces, Carleson measure
\end{abstract}

\thanks{\ \ *Corresponding author. }
\thanks{\ \ \ \textit{Email addresses:}\texttt{1914407155@qq.com}(L.Y. Zhao),\ \ \texttt{zywang@gdut.edu.cn}(Z.Y. Wang),\\ \texttt{1131258896@qq.com}(Z.R. Su).\\
\textit{Addresses:Department of Mathematics and Statistics,Guangdong University of Technology, 510520 Guangzhou, Guangdong, P. R. China.}}

\maketitle
\section{Introduction}
Let $\mu$ be a positive Borel measure on the interval $[0,1)$. The Hankel matrix
$\mathcal{H}_{\mu}=(\mu_{n,k})_{n,k\geq0}$
with entries $\mu_{n,k}=\mu_{n+k}$, where $\mu_n=\int_{[0,1)}t^{n}d\mu(t)$.
For an analytic function $f(z)=\sum_{n=0}^{\infty}a_nz^n$, the generalized  Hilbert operator is defined as
$$\mathcal{H}_{\mu}(f)(z)=\sum_{n=0}^{\infty}\left(\sum_{k=0}^{\infty}\mu_{n,k}a_k\right)z^n,\quad z\in\mathbb{D},\eqno(1)$$
on the space of analytic functions in $\mathbb{D}$. The Derivative-Hilbert operator $\mathcal{DH}_{\mu}$ is first studied by Ye and Zhou \cite{author2,author3}, they defined $\mathcal{DH}_{\mu}$   as
$$\mathcal{DH}_{\mu}(f)(z)=\sum_{n=0}^{\infty}\left(\sum_{k=0}^{\infty}\mu_{n,k}a_k\right)(n+1)z^n,\quad \ z\in \mathbb{D},\eqno(2)$$
on the space of analytic functions in $\mathbb{D}$. If the terms on the right-hand sides of $(1),(2)$ make sense for all $z\in\mathbb{D}$, and the resulting functions are analytic in $\mathbb{D}$.  It is due to $$\mathcal{DH}_{\mu}(f)(z)=(z\mathcal{H}_{\mu}(f)(z))',$$
$\mathcal{DH}_{\mu}$ is called  Derivative-Hilbert operator. Another generalized integral operator related to $\mathcal{H}_{\mu}$ and $\mathcal{DH}_{\mu}$ (denoted by $\mathcal{I}_{\mu_{\alpha}},\,\alpha\in\mathbb{N}^{+}$) is defined by
$$\mathcal{I}_{\mu_{\alpha}}(f)(z)=\int_{[0,1)}\frac{f(t)}{(1-tz)^{\alpha}}d\mu(t).$$
When $\alpha=1$, we use $\mathcal{I}_{\mu}$  denote $\mathcal{I}_{\mu_1}$. In \cite{author18}, Galanopoulos and Pel\'{a}ez come to a conclusion that when $\mu$ is a Carleson measure on $[0,1)$, then $\mathcal{I}_{\mu}$ and $\mathcal{H}_{\mu}$ are well defined in $H^1$, moreover, $\mathcal{I}_{\mu}(f)=\mathcal{H}_{\mu}(f)$ for all $f\in H^1$.
In \cite{author1}, Chatzifountas extended Galanopoulos and Pel\'{a}ez's results to all Hardy space. He  characterized  measures $\mu$ for which $\mathcal{H}_{\mu}$ is bounded(compact) operator from $H^p$ into $H^q$, $0<p, q<\infty$.
Ye and Zhou characterized the measure $\mu$ for which $\mathcal{I}_{\mu_{2}}$ and $\mathcal{DH}_{\mu}$ is bounded\,(resp.,compact)\,on Bloch space in \cite{author2}. They did the similar researches on Bergman spaces in \cite{author3}. In \cite{author6}, Bao and Wulan gave another description about Carleson measure on $[0,1)$ and proved that when $0<p<2$, the range of the ces\`{a}ro-like operator acting on $H^{\infty}$ is a subset of $\mathcal{Q}_p$ if and only if $\mu$ is a Carleson measure.\\
\indent
Following the idea of the paper by Bao and Wulan \cite{author6}, In this paper, we determine the range of the Hilbert operator and Derivative-Hilbert operator acting on $H^{\infty}$.\\
\indent
Notation. Throughout this paper, $C$ denotes a positive constant which may be different from one occurrence to the next. The symbol $A\approx B$ means that $A\lesssim B\lesssim A$. We say that $A\lesssim B$ if there exists a positive constant $C$ such that $A\leq C B$.
\section{Notation and Preliminaries}

Let $\mathbb{D}=\{z:|z|\leq1\}$ and $\partial\mathbb{D}=\{z:|z|=1\}$ denote respectively the open unit disc and the unit circle in the complex plane $\mathbb{C}$. Let  $H(\mathbb{D})$ be the space of all analytic functions in $\mathbb{D}$ endowed with the topology of uniform convergence in compact subsets. \\
\indent
If $0<r<1$ and $f\in H(\mathbb{D})$, we set
\begin{align*}
M_p(r,f)&=\left(\frac{1}{2\pi}\int_{0}^{2\pi}|f(re^{it})|^pdt\right)^{\frac{1}{p}},\ 0<p<\infty,\\
M_{\infty}(r,f)&=\sup_{|z|=r}|f(z)|.
\end{align*}
For $0<p\leq \infty$, the Hardy space $H^{p}$ consists of those $f\in H(\mathbb{D})$ such that
$$\|H\|_{p}=\sup_{0<r<1}M_p(r,f)<\infty.$$
We refer to \cite{author7} for the notation and results about Hardy spaces.\\
\indent
The Bloch space $\mathcal{B}$  is the set of functions $f\in{H}(\mathbb{D})$ with $$\|f\|_{\mathcal{B}}=|f(0)|+\sup_{z\in\mathbb{D}}(1-|z|^2)|f{'}(z)|<\infty.$$ It is known that $\mathcal{B}$ is a Banach space with the norm $\|f\|_{\mathcal{B}}$. A classical reference for the theory of Bloch functions is \cite{author8}.\\
\indent
It is well known that the set of all disc automorphisms(i.e., of all one-to-one analytic maps of $\mathbb{D}$ onto itself), denoted Aut$(\mathbb{D})$, coincides with the set of all M\"{o}bius transformations of $\mathbb{D}$ onto itself: $$\Aut(\mathbb{D})=\{e^{i\theta}\sigma_a:a\in\mathbb{D}\  \mbox{and} \ \theta \  \mbox{is real}\},$$
where $$\sigma_a(z)=\frac{a-z}{1-\bar{a}z},\quad z\in\mathbb{D}.$$
\indent
A space $X$ of analytic functions in $\mathbb{D}$, defined via a semi-norm $\rho$, is said to be conformally invariant or M\"{o}bius invariant if whenever $f\in X$, then also $f\circ\phi\in X$  for any $\phi\in \mbox{Aut}(\mathbb{D})$ and moreover, $\rho(f\circ\phi)\leq C\rho(f)$ for some positive constant $C$ and all $f\in X$.

For $0\leq p<\infty,$ a function $f$ analytic in $\mathbb{D}$ belongs to $\mathcal{Q}_{p}$ if $$\|f\|_{\mathcal{Q}_p}^{2}=\sup_{w\in\mathbb{D}}\int_{\mathbb{D}}|f'(z)|^2(1-|\sigma_w(z)|^2)^pdA(z)<\infty.$$
Since $$\|f\circ\phi\|_{\mathcal{Q}_p}=\|f\|_{\mathcal{Q}_p}$$
for every $f\in\mathcal{Q}_p$ and $\phi\in \Aut(\mathbb{D})$, $\mathcal{Q}_p$ spaces are M\"{o}bius invariant spaces.
The space $\mathcal{Q}_0$ is the Dirichlet space $\mathcal{D}$ and the space $\mathcal{Q}_1$ is coincide with BMOA. When $0<p<1$, $\mathcal{Q}_p$ is a subset of BMOA, when $1<p<\infty$, $\mathcal{Q}_p=\mathcal{B}$. We refer to \cite{author9,author10} for the notation and results regarding $\mathcal{Q}_p$ spaces.\\
\indent
Let us start recalling the the mean Lipschitz space $\Lambda_{\alpha}^{p}$. For given $1\leq p\leq\infty$ and $0\leq \alpha\leq1$, the mean Lipschitz space $ \Lambda_{\alpha}^{p}$ consists of those functions $f$ analytic in $\mathbb{D}$ having a non-tangential limit almost everywhere for which $w_p(\delta,f)=\mathcal{O}(\delta^{\alpha}),\ \mbox{as}\ \delta \to 0$, where $w_p(.,f)$ is the integral modulus of continutiy of order $p$ of the boundary values $f(e^{i\theta})$ of $f$. A function $f\in H(\mathbb{D})$ belongs to $\Lambda_{\alpha}^{p}$, if
$$\|f\|_{p,\alpha}=|f(0)|+\sup_{0\leq r<1}(1-r)^{1-\alpha}M_p(r,f')<\infty.$$
A classical results about $\Lambda_{\alpha}^{p}$ is that $\Lambda_{\alpha}^{p}\subset H^p$ with $1\leq p\leq \infty$ and $0<\alpha\leq 1$(see \cite{author11}).
\\
\indent
For an arc ${I}\subseteq\partial\mathbb{D}$, let $|I|=\frac{1}{2\pi}\int_{I}|d\xi|$ be the normalized length of ${I}$ and ${S}({I})$ be the Carleson square based on $I$ with
$$ {S}({I})=\{z=re^{it}:e^{it}\in {I};1-|I|\leq r<1\}.$$
Clearly, if ${I}=\partial\mathbb{D}$, then ${S}({I})=\mathbb{D}$.\\
\indent
For $0<s<\infty$, we say that a positive Borel measure on $\mathbb{D}$ is a $s$-Carleson measure(See \cite{author12}) if
$$\sup_{I\subset\partial\mathbb{D}}\frac{\mu(S(I))}{|I|^s}<\infty.$$
If $s=1$, $s$-Carleson measure is the classical Carleson measure. When the positive Borel measure $\mu$ on $\mathbb{D}$ satisfies the following equation
$$\lim_{|I|\to 0}\frac{\mu(S(I))}{|I|^s}=0,$$
$\mu$ is a vanishing $s$-Carleson measure.
If $s=1$, the vanishing $s$-Carleson measure is the vanishing  Carleson measure.\\
 \indent
A positive Borel measure on $[0,1)$ also can be seen as a Borel measure on $\mathbb{D}$
by identifying it with the measure $\tilde{\mu}$ defined by
$$\tilde{\mu}(\textit E)=\mu(\textit E\cap[0,1)),$$
for any Borel subset $E$ of $\mathbb{D}$. Then a positive Borel measure $\mu$ on $[0,1)$
can be seen as an $s$-Carleson measure on $\mathbb{D}$, if
$$\sup_{t\in[0,1)} \frac{\mu([t,1))}{(1-t)^s}<\infty.$$
We have similar statement for vanishing $s$-Carleson measure.\\
\indent
Finally, we recall a general form of the Minkowski inequality which will be used in our main proof(see \cite{author21}, Appendices, A.1).\\
\indent Let $1\leq p<\infty$, then
$$\left[\int_{S_2}\left(\int_{S_1}|F(x,y)|d\mu_1(x)\right)^pd\mu_2(y)\right]^{\frac{1}{p}}\leq\int_{S_1}\left(\int_{S_2}|F(x,y)|^pd\mu_2(y)\right)^{\frac{1}{p}}d\mu_1(x).$$
Here $F(x,y)$ is a measurable function on the $\sigma$-finite product measure space $S_1\times S_2$; $d\mu_1(x)$ and $d\mu_2(y)$ are the measures on $S_1$ and $S_2$ respectively.
\\

\section{main results}
\indent A number of results will be needed to prove our main theorems. We start with a characterization of Carleson measure  on $[0,1)$ see  \cite{author6} for the detail process of proof.
\begin{lemma}\label{lem3.1}
Suppose $0<t<\infty$, $0\leq r<s<\infty$ and $\mu$ is a finite positive Borel measure on $[0,1)$. Then the following conditions are equivalent:\\
\\
\indent $(i)$ $\mu$ is an $s$-Carleson measure;\\
\\
\indent $(ii)$
$$\sup_{a\in\mathbb{D}}\int_{[0,1)}\frac{(1-|a|)^t}{(1-x)^r(1-|a|x)^{s+t-r}}d\mu(t)<\infty;$$\\
\indent $(iii)$
$$\sup_{a\in\mathbb{D}}\int_{[0,1)}\frac{(1-|a|)^t}{(1-x)^r|1-ax|^{s+t-r}}d\mu(t)<\infty.$$
\end{lemma}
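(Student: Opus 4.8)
The three conditions are listed in roughly increasing order of difficulty, and the plan is to establish $(ii)\Leftrightarrow(iii)$ first, then the easy implication $(ii)\Rightarrow(i)$, and finally $(i)\Rightarrow(ii)$, where essentially all the work lies. For the first equivalence, note that for $x\in[0,1)$ and $a\in\mathbb{D}$ one has $|1-ax|^{2}=1-2(\real a)x+|a|^{2}x^{2}\geq(1-|a|x)^{2}$, hence $|1-ax|\geq 1-|a|x>0$; since $s+t-r>0$ (because $s>r$ and $t>0$), this gives $|1-ax|^{-(s+t-r)}\leq(1-|a|x)^{-(s+t-r)}$, so the integral in $(iii)$ is dominated termwise by the one in $(ii)$ for each $a$, whence $(ii)\Rightarrow(iii)$. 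Conversely, choosing $a\in[0,1)$ in $(iii)$ reproduces exactly the integrand of $(ii)$, so the two suprema in fact coincide.

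For $(ii)\Rightarrow(i)$: given $b\in[0,1)$, take $a=b$ in $(ii)$ and keep only the part of the integral over $x\in[b,1)$. On that set $1-x\leq 1-b$ and $1-b\leq 1-bx\leq 1-b^{2}\leq 2(1-b)$, so the integrand is $\gtrsim(1-b)^{t-r-(s+t-r)}=(1-b)^{-s}$; integrating yields $\mu([b,1))\lesssim(1-b)^{s}$, i.e.\ $\mu$ is an $s$-Carleson measure on $[0,1)$.

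The implication $(i)\Rightarrow(ii)$ is the core. I would first record the auxiliary estimate $\int_{[0,1)}(1-x)^{-r}\,d\mu(x)<\infty$: splitting $[0,1)$ along the points $1-2^{-n}$ and using $\mu([1-2^{-n},1))\lesssim 2^{-ns}$ produces a series $\sum_{n}2^{nr}2^{-ns}$ that converges because $s>r$. Now fix $a\in\mathbb{D}$ and set $b=|a|$; the case $b\leq 1/2$ follows at once from $1-bx\geq 1-b\geq 1/2$ together with this estimate, with bound uniform in $b\in[0,1/2]$. For $b\geq 1/2$ the key geometric fact is the two-sided comparison $1-bx\approx(1-b)+(1-x)$, valid since $1-bx\geq\max\{1-b,\,b(1-x)\}\geq\tfrac12\max\{1-b,1-x\}$ and $1-bx\leq(1-b)+(1-x)$. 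Accordingly I split the integral in $(ii)$ over $x\in[b,1)$, where $1-bx\approx 1-b$, and over $x\in[0,b)$, where $1-bx\approx 1-x$. On $[b,1)$ the integral is $\approx(1-b)^{r-s}\int_{[b,1)}(1-x)^{-r}\,d\mu$; decomposing $[b,1)$ by the points $1-2^{-n}(1-b)$ and invoking the Carleson bound gives $\int_{[b,1)}(1-x)^{-r}\,d\mu\lesssim(1-b)^{s-r}$ (convergence again needs $s>r$), so this piece is $\lesssim 1$. On $[0,b)$ the integral is $\approx(1-b)^{t}\int_{[0,b)}(1-x)^{-(s+t)}\,d\mu$; decomposing $[0,b)$ by the points $1-2^{n}(1-b)$ — only finitely many blocks occur, since $1-x\leq 1$ — and using $\mu([1-2^{n+1}(1-b),1))\lesssim(2^{n+1}(1-b))^{s}$ gives $\int_{[0,b)}(1-x)^{-(s+t)}\,d\mu\lesssim(1-b)^{-t}\sum_{n\geq 0}2^{-nt}\lesssim(1-b)^{-t}$ (convergence needs $t>0$), so this piece is also $\lesssim 1$. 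Summing the two bounds gives $(ii)$ with a constant independent of $a$.

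The main obstacle is the bookkeeping in $(i)\Rightarrow(ii)$: one must anchor the dyadic scales at distance $1-b$ from $\partial\mathbb{D}$ rather than at $\partial\mathbb{D}$ itself, split correctly according to whether $1-x$ is smaller or larger than $1-b$, and observe that the ``near the boundary'' series converges precisely because $s>r$ while the ``away from the boundary'' series converges precisely because $t>0$ — which is exactly why the hypotheses $0\leq r<s$ and $t>0$ are the natural ones. Everything else is routine.
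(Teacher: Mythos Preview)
Your proof is correct. The equivalence $(ii)\Leftrightarrow(iii)$ is handled cleanly via $|1-ax|\geq 1-|a|x$ and the observation that the integrand in $(ii)$ depends only on $|a|$; the implication $(ii)\Rightarrow(i)$ is the standard test at $a=b$; and for $(i)\Rightarrow(ii)$ your dyadic decomposition anchored at scale $1-b$, split according to whether $1-x$ is smaller or larger than $1-b$, is exactly the right mechanism, with the two geometric series converging precisely because $s>r$ and $t>0$ respectively.

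As for comparison with the paper: the paper does not actually prove this lemma. It states the result and refers the reader to Bao and Wulan \cite{author6} for the argument. Your dyadic approach is the standard one for such Carleson-type characterizations and is almost certainly what appears there, so there is nothing substantively different to report.
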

We recall a charcterization of the functions $f\in H(\mathbb{D})$ whose Taylor coefficients is non-negative real number which belongs to $\mathcal{Q}_{p}$(Theorem $2.3$ in \cite{author13}).
\begin{lemma}\label{lem3.2}
Let $0<p<\infty$ and $f(z)=\sum_{n=0}^{\infty}a_nz^n$ be an analytic function in $\mathbb{D}$ with $a_n\geq 0$. Then $f\in\mathcal{Q}_{p}$ if and only if
$$\sup_{0\leq r<1}\sum_{n=0}^{\infty}\frac{(1-r)^p}{(n+1)^{p+1}}\left(\sum_{k=0}^{n}(k+1)a_{k+1}(n-k+1)^{p-1}r^{n-k}\right)^2<\infty.$$
\end{lemma}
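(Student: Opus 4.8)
\noindent\textit{Proof strategy.} The plan is to rewrite the $\mathcal{Q}_p$ semi-norm through the identity $1-|\sigma_w(z)|^2=\frac{(1-|w|^2)(1-|z|^2)}{|1-\bar w z|^2}$, so that $f\in\mathcal{Q}_p$ if and only if $\sup_{w\in\mathbb{D}}\Phi(w)<\infty$, where
$$\Phi(w):=\int_{\mathbb{D}}|f'(z)|^{2}\,\frac{(1-|w|^{2})^{p}(1-|z|^{2})^{p}}{|1-\bar w z|^{2p}}\,dA(z),$$
and then to compute $\Phi(w)$ by a double power series expansion. Writing $f'(z)=\sum_{m\ge 0}(m+1)a_{m+1}z^{m}$ and $(1-\zeta)^{-p}=\sum_{j\ge 0}c_{j}\zeta^{j}$ with $c_{j}=\binom{p+j-1}{j}=\frac{\Gamma(p+j)}{\Gamma(p)\Gamma(j+1)}\ge 0$, one expands $|1-\bar w z|^{-2p}=(1-\bar w z)^{-p}(1-w\bar z)^{-p}$, substitutes $z=se^{i\psi}$ and $w=\rho e^{i\phi}$, and integrates in $\psi$; this kills every term except those indexed by $m+i=k+j$. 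The crucial use of the hypothesis $a_n\ge 0$ enters here: since each surviving coefficient is then nonnegative and $\Phi(\rho e^{i\phi})\ge 0$ is real, the triangle inequality applied termwise to the resulting series gives $\Phi(\rho e^{i\phi})\le\Phi(\rho)$ for all $\phi$, whence $\sup_{w\in\mathbb{D}}\Phi(w)=\sup_{0\le\rho<1}\Phi(\rho)$.

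It then remains to evaluate $\Phi(\rho)$ for real $\rho\in[0,1)$. Grouping the surviving terms by the common value $m+i=k+j=:n$ and putting $i=n-m$, $j=n-k$, the inner double sum collapses to a perfect square, and after integrating in $s$ one obtains, up to a constant depending only on $p$,
$$\Phi(\rho)\approx(1-\rho^{2})^{p}\sum_{n=0}^{\infty}B(n+1,p+1)\,\Bigl(\sum_{k=0}^{n}(k+1)a_{k+1}\,c_{n-k}\,\rho^{\,n-k}\Bigr)^{2},$$
where the weight $B(n+1,p+1)$ comes from $\int_{0}^{1}s^{2n+1}(1-s^{2})^{p}\,ds=\tfrac12 B(n+1,p+1)$. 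Finally one invokes the elementary, uniform Gamma-function asymptotics $B(n+1,p+1)=\frac{\Gamma(n+1)\Gamma(p+1)}{\Gamma(n+p+2)}\approx(n+1)^{-(p+1)}$ and $c_{j}=\frac{\Gamma(p+j)}{\Gamma(p)\Gamma(j+1)}\approx(j+1)^{p-1}$, together with $(1-\rho^{2})^{p}\approx(1-\rho)^{p}$, to identify $\sup_{0\le\rho<1}\Phi(\rho)$ with the quantity displayed in the statement. Combining this chain of equivalences proves both implications.

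The only non-formal points are the interchanges of summation and integration and the limit $\rho\to 1$: after replacing all terms by their absolute values the integrands are nonnegative, so Tonelli's theorem justifies the termwise computation on each disc $\{|z|\le s<1\}$, and monotone convergence handles $s\to 1$, the finiteness of the displayed supremum being precisely what is assumed in the ``if'' direction. I expect the main obstacle to be organizational rather than conceptual: pushing the double power series cleanly through the angular and radial integrations while preserving the perfect-square structure, and checking that the comparison constants in the two Gamma-asymptotics stay bounded above and below as $n$ and $j$ range over all nonnegative integers (with $p$ fixed).
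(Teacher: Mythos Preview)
The paper does not supply its own proof of this lemma; it merely recalls the result as Theorem~2.3 of Aulaskari--Girela--Wulan \cite{author13}. Your strategy is correct and is in fact the standard route to this characterization (and essentially what one finds in the cited reference): rewrite the $\mathcal{Q}_p$ weight via $1-|\sigma_w(z)|^2=\tfrac{(1-|w|^2)(1-|z|^2)}{|1-\bar w z|^2}$, expand $|f'(z)|^2$ and $|1-\bar w z|^{-2p}$ as double power series, integrate out the angular variable of $z$ to force $m+i=k+j$, and then use the nonnegativity of all surviving coefficients to conclude $\Phi(\rho e^{i\phi})\le\Phi(\rho)$, reducing the supremum to $w\in[0,1)$. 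The factorization of the constrained double sum into a perfect square is exactly right, and the identifications $B(n+1,p+1)\approx(n+1)^{-(p+1)}$ and $c_j\approx(j+1)^{p-1}$ (with two-sided constants depending only on $p$, valid for all $n,j\ge 0$ by Stirling or by the standard $\Gamma(x+a)/\Gamma(x+b)\approx x^{a-b}$ estimate) finish the job.

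One small remark on wording: the step $\Phi(\rho e^{i\phi})\le\Phi(\rho)$ is not literally the triangle inequality but rather the observation that the post-$\psi$-integration series has nonnegative coefficients multiplied by $\bar w^i w^j=\rho^{i+j}e^{i\phi(j-i)}$; since the full sum is real, it equals the sum of the real parts, and $\cos(\phi(j-i))\le 1$ termwise. Your Tonelli/monotone-convergence justification for the interchanges is adequate. Nothing substantive is missing.
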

The following lemma is from \cite{author14}.
\begin{lemma}\label{lem3.3}
Suppose $s>-1$, $r>0$, $t>0$ with $r+t-s-2>0$. If $r,t<2+s$, then
$$\int_{\mathbb{D}}\frac{(1-|z|^2)^s}{|1-\bar{a}z|^r|1-\bar{b}z|^t}dA(z)\lesssim\frac{1}{|1-\bar{a}b|^{r+t-s-2}}$$
for all $a,b\in\mathbb{D}$. If $t<2+s<r$, then
$$\int_{\mathbb{D}}\frac{(1-|z|^2)^s}{|1-\bar{a}z|^r|1-\bar{b}z|^t}dA(z)\lesssim\frac{(1-|a|^2)^{2+s-r}}{|1-\bar{a}b|^t}$$
for all $a,b\in\mathbb{D}$.
\end{lemma}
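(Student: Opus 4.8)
The plan is to reduce the two--point estimate to the one--point ``layered'' bound underlying the classical Forelli--Rudin estimates: for $s>-1$ and $w\in\mathbb{D}$, a dyadic slice $\{z\in\mathbb{D}:|1-\bar{w}z|\approx\rho\}$ is empty when $\rho\lesssim 1-|w|$, and otherwise has area $\lesssim\rho^{2}$ and consists of points with $1-|z|^{2}\lesssim\rho$, hence contributes $\lesssim\rho^{s+2}$ to an integral of $(1-|z|^{2})^{s}$. Summing these slices recovers $\int_{\mathbb{D}}(1-|z|^{2})^{s}|1-\bar{w}z|^{-c}\,dA\approx 1$ when $c<s+2$ and $\approx(1-|w|^{2})^{s+2-c}$ when $c>s+2$; I would apply this slice bound directly, but only after localizing.

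First I would record, for $z,w\in\mathbb{D}$, the elementary inequalities $|w-z|\le|1-\bar{w}z|$ and $1-|z|\le|1-\bar{w}z|$ (both from $(1-|w|^{2})(1-|z|^{2})\ge 0$), hence $1-|z|^{2}\le 2|1-\bar{w}z|$; the ``triangle inequality'' $|1-\bar{a}b|\le|1-\bar{a}z|+|1-\bar{b}z|$; and $|1-\bar{a}b|\ge 1-|a||b|\ge\tfrac12(1-|a|^{2})$ (symmetrically $\ge\tfrac12(1-|b|^{2})$). Writing $d=|1-\bar{a}b|$, split $\mathbb{D}=\Omega_{a}\cup\Omega_{b}\cup\Omega_{0}$ with $\Omega_{a}=\{|1-\bar{a}z|\le d/2\}$, $\Omega_{b}=\{|1-\bar{b}z|\le d/2\}$ and $\Omega_{0}$ the remainder. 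The triangle inequality makes $\Omega_{a}$ and $\Omega_{b}$ disjoint, forces $|1-\bar{b}z|\approx d$ on $\Omega_{a}$ and $|1-\bar{a}z|\approx d$ on $\Omega_{b}$, and on $\Omega_{0}$ gives $|1-\bar{a}z|\approx|1-\bar{b}z|\gtrsim d$.

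On each region, freeze the factor comparable to $d$ and estimate the remaining one by summing slices. On $\Omega_{0}$ this gives $\int_{\Omega_{0}}\lesssim\sum(2^{k}d)^{s+2-r-t}$ over scales $2^{k}d\gtrsim\max(d,1-|a|)$; since $s+2-r-t<0$ the smallest scale dominates, so $\int_{\Omega_{0}}\lesssim\max(d,1-|a|)^{s+2-r-t}$. On $\Omega_{a}$, freezing $|1-\bar{b}z|^{-t}\approx d^{-t}$ leaves $d^{-t}\sum(2^{-j}d)^{s+2-r}$ over scales in $[1-|a|,d/2]$: in Case~1 ($r<s+2$) the exponent is positive, the largest scale dominates, and $\int_{\Omega_{a}}\lesssim d^{s+2-r-t}$; in Case~2 ($r>s+2$) the exponent is negative, the scale $\approx 1-|a|$ dominates, and $\int_{\Omega_{a}}\lesssim d^{-t}(1-|a|)^{s+2-r}\approx(1-|a|^{2})^{s+2-r}d^{-t}$. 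On $\Omega_{b}$, freezing $|1-\bar{a}z|^{-r}\approx d^{-r}$ and using $t<s+2$ always gives $\int_{\Omega_{b}}\lesssim d^{-r}d^{s+2-t}=d^{s+2-r-t}$. Assembling: in Case~1 each piece is $\lesssim d^{s+2-r-t}=|1-\bar{a}b|^{-(r+t-s-2)}$ (for $\Omega_{0}$, $\max(d,1-|a|)^{s+2-r-t}\le d^{s+2-r-t}$ since the exponent is negative); in Case~2 one uses $d\ge\tfrac12(1-|a|^{2})$ to bound the $\Omega_{b}$ and $\Omega_{0}$ terms by $(1-|a|^{2})^{s+2-r}d^{-t}$, which also dominates the $\Omega_{a}$ term.

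The main obstacle is the bookkeeping in Case~2: the factor $(1-|a|^{2})^{s+2-r}$ comes \emph{only} from truncating the dyadic sum over $\Omega_{a}$ at the scale $1-|a|$ (equivalently, from $\Omega_{a}$ being empty unless $b$ already lies within distance $\approx d$ of $a$), so this cutoff must be tracked precisely; and on $\Omega_{b},\Omega_{0}$, where no cutoff is available, one needs exactly the lower bound $|1-\bar{a}b|\ge\tfrac12(1-|a|^{2})$ to turn a bare power of $d$ into the required mixed power. Everything else is routine summation of geometric series. (In any case the statement is quoted verbatim from \cite{author14}.)
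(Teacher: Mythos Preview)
The paper does not prove this lemma at all; it simply quotes it from Ortega--F\`abrega \cite{author14}. Your proposal therefore supplies what the paper omits, and the argument you sketch is the standard one and is correct. The pseudohyperbolic triangle inequality $|1-\bar ab|\le|1-\bar az|+|1-\bar bz|$ indeed holds (since $|a-z|\le|1-\bar az|$ for $a,z\in\mathbb{D}$), so the three-region split $\Omega_a\cup\Omega_b\cup\Omega_0$ behaves exactly as you claim; and the slice estimate $\int_{\{|1-\bar wz|\approx\rho\}}(1-|z|^2)^s\,dA(z)\lesssim\rho^{s+2}$ for $\rho\ge 1-|w|$ is valid for all $s>-1$ because the slice is contained in a Carleson box of side $\approx\rho$, over which $\int(1-|z|^2)^s\,dA\approx\rho^{s+2}$. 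One harmless redundancy: since $d=|1-\bar ab|\ge 1-|a||b|\ge 1-|a|$ always, your $\max(d,1-|a|)$ on $\Omega_0$ is simply $d$, which streamlines the bookkeeping there. The Case~2 reduction of the $\Omega_b$ and $\Omega_0$ contributions via $d\ge\tfrac12(1-|a|^2)$ is exactly right.
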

We shall also need the following Lemma $3.4$(see \cite{author15})which is a generalization of Lemma $3.1$ in \cite{author16} from $p=2$ to $1<p<\infty$.
\begin{lemma}\label{3.4}
Let $f\in H(\mathbb{D})$ with $f(z)=\sum_{n=0}^{\infty}a_nz^n$. Suppose $1<p<\infty$ and the sequence $\{a_n\}$ is a decreasing sequence of nonnegative numbers. If $X$ is a subsequence of $H(\mathbb{D})$ with $\Lambda_{{1}/{p}}^{p}\subseteq X\subseteq \mathcal{B}$, then $$f\in X\Longleftrightarrow a_n=\mathcal{O}\left(\frac{1}{n}\right).$$
\end{lemma}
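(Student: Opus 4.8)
\medskip
\noindent\textbf{Plan of proof.} The plan is to split the stated equivalence into two one‑sided facts about a function $f(z)=\sum_{n\ge 0}a_nz^n$ with $a_n\ge 0$ and $\{a_n\}$ decreasing: \textbf{(I)} if $a_n=\mathcal{O}(1/n)$ then $f\in\Lambda_{1/p}^{p}$; and \textbf{(II)} if $f\in\mathcal{B}$ then $a_n=\mathcal{O}(1/n)$. Granting (I) and (II), the hypothesis $\Lambda_{1/p}^{p}\subseteq X\subseteq\mathcal{B}$ closes the argument at once: if $a_n=\mathcal{O}(1/n)$ then $f\in\Lambda_{1/p}^{p}\subseteq X$ by (I), while if $f\in X$ then $f\in\mathcal{B}$, so $a_n=\mathcal{O}(1/n)$ by (II). Thus all the content sits in (I) and (II), and the role of the sandwiching space $X$ is only that its two extreme admissible choices $\Lambda_{1/p}^{p}$ and $\mathcal{B}$ happen to force the \emph{same} coefficient growth.

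For (II) I would argue directly on the real axis. The Bloch condition gives $(1-r^{2})f'(r)\le\|f\|_{\mathcal{B}}$ for all $r\in[0,1)$, and $f'(r)=\sum_{n\ge 1}na_nr^{n-1}\ge 0$. Taking $r=1-1/N$ and using that $\{a_n\}$ is decreasing together with $r^{n-1}\ge(1-1/N)^{N}\ge\tfrac14$ for $1\le n\le N$, one gets $f'(r)\ge\tfrac14\sum_{N/2\le n\le N}na_n\ge\tfrac14 a_N\sum_{N/2\le n\le N}n\gtrsim a_NN^{2}$; since also $1-r^{2}\ge 1/N$, this yields $a_NN\lesssim\|f\|_{\mathcal{B}}$, i.e.\ $a_N=\mathcal{O}(1/N)$.

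For (I), set $C_0:=\sup_n na_n<\infty$ (so $a_n\to 0$) and write $a_n=\sum_{k\ge n}d_k$ with $d_k:=a_k-a_{k+1}\ge 0$ and $\sum_k d_k=a_1$. Interchanging the two sums (justified by absolute convergence on compact subsets of $\mathbb{D}$) gives $f'(z)=\sum_{k\ge 1}d_kP_k(z)$, where $P_k(z)=\sum_{n=1}^{k}nz^{n-1}=\frac{d}{dz}(z+\cdots+z^{k})$, and then Minkowski's inequality gives $M_p(r,f')\le\sum_{k\ge 1}d_kM_p(r,P_k)$. For $1<p<\infty$ I would control the factors by two complementary bounds: first, since $P_k$ is a polynomial, $M_p(r,P_k)\le M_p(1,P_k)\lesssim k^{2-1/p}$, which comes from Bernstein's inequality applied to $z+\cdots+z^{k}$ (whose $L^{p}$‑norm is $\approx k^{1-1/p}$, the Dirichlet‑kernel estimate); second, $P_k$ is the degree‑$(k-1)$ Taylor section of $(1-z)^{-2}$ and the partial‑sum projections are uniformly bounded on $L^{p}$ for $1<p<\infty$, so $M_p(r,P_k)\lesssim M_p\!\left(r,(1-z)^{-2}\right)\approx(1-r)^{-(2-1/p)}$. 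Splitting $\sum_k$ at $N:=\lceil 1/(1-r)\rceil$, one applies the first bound on $k<N$ together with the summation‑by‑parts estimate $\sum_{k<N}d_kk^{2-1/p}\lesssim C_0\sum_{k<N}k^{-1/p}\lesssim C_0N^{1-1/p}$, and the second bound on $k\ge N$ together with $\sum_{k\ge N}d_k=a_N\le C_0/N$, arriving at $M_p(r,f')\lesssim C_0(1-r)^{-(1-1/p)}$; hence $(1-r)^{1-1/p}M_p(r,f')\lesssim C_0$ and $f\in\Lambda_{1/p}^{p}$.

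The hard part will be (I) in the range $1<p<2$. For $p\ge 2$ the decomposition is unnecessary: Hausdorff--Young applied to $t\mapsto f'(re^{it})$, whose Fourier coefficients $(n+1)a_{n+1}$ are bounded by $C_0$, directly gives $M_p(r,f')\le\big(\sum_n((n+1)a_{n+1})^{p'}r^{np'}\big)^{1/p'}\lesssim C_0(1-r)^{-1/p'}=C_0(1-r)^{-(1-1/p)}$, with no use of monotonicity at all. For $1<p<2$, however, Hausdorff--Young points the wrong way and the naive $M_2$/Parseval estimate loses a power of $(1-r)$, so the monotonicity of $\{a_n\}$ must genuinely enter; the decomposition $f'=\sum_k d_kP_k$ together with the sharp two‑regime control of $\|P_k\|_{L^p}$ (the "constant" regime $k^{2-1/p}$ for $k\lesssim 1/(1-r)$ versus the "truncation" regime $(1-r)^{-(2-1/p)}$ for $k\gtrsim 1/(1-r)$) is precisely the mechanism that supplies it. The case $p=2$ of (I) recovers Lemma~3.1 of \cite{author16}.
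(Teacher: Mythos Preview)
The paper does not prove this lemma at all; it simply quotes it from Merch\'an \cite{author15} (as a generalization of the $p=2$ case in \cite{author16}), so there is no in-paper argument to compare against. Your self-contained proof is correct. The reduction to the two endpoint implications---(I) $a_n=\mathcal{O}(1/n)\Rightarrow f\in\Lambda_{1/p}^{p}$ and (II) $f\in\mathcal{B}\Rightarrow a_n=\mathcal{O}(1/n)$---is exactly the right strategy, and both are carried out cleanly: the Bloch-side estimate (II) via evaluation of $f'(r)$ at $r=1-1/N$ is standard, and for (I) the Abel decomposition $f'=\sum_{k}d_kP_k$ with $d_k=a_k-a_{k+1}\ge 0$, combined with the two-regime control of $M_p(r,P_k)$ (the Bernstein/Dirichlet bound $\lesssim k^{2-1/p}$ for $k\lesssim(1-r)^{-1}$ and the Riesz-projection bound $\lesssim(1-r)^{-(2-1/p)}$ for $k\gtrsim(1-r)^{-1}$), yields the desired $(1-r)^{1-1/p}M_p(r,f')\lesssim\sup_n na_n$. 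Your remark that Hausdorff--Young already settles (I) for $p\ge 2$ without any monotonicity, whereas the range $1<p<2$ genuinely requires the decreasing hypothesis through the $d_k$-decomposition, is a useful sharpening that is not made explicit in the paper.
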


Finally we recall the following result which is another  characterization of $s$-Carleson measure $\mu$ on $[0,1)$(see \cite{author1}, Proposition $1$).
\begin{lemma}\label{3.5}
Let $\mu$ be a positive finite Borel measure on $[0,1)$ and $s>0$. Then $\mu$ is a $s$-Carleson measure if and only if the sequence of moments $\{\mu_n\}_{n=0}^{\infty}$ satisfies $$\sup_{n\geq0}(1+n)^s\mu_n<\infty.$$
\end{lemma}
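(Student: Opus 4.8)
\smallskip
\noindent\emph{Sketch of the argument.}
The plan is to establish the two implications separately, in each case passing between the moments $\mu_n=\int_{[0,1)}t^n\,d\mu(t)$ and the tail function $\nu(t):=\mu([t,1))$, since the $s$-Carleson condition for a measure carried by $[0,1)$ is precisely $\sup_{t\in[0,1)}\nu(t)/(1-t)^s<\infty$.

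Suppose first that $\mu$ is an $s$-Carleson measure, so $\nu(t)\le C(1-t)^s$ for all $t\in[0,1)$. Because $\nu$ is non-increasing with $\nu(t)\to0$ as $t\to1^-$ (continuity from above, $\mu$ being finite), integration by parts gives, for every $n\ge1$,
\[ \mu_n=-\int_0^1 t^n\,d\nu(t)=n\int_0^1 t^{n-1}\nu(t)\,dt\le Cn\int_0^1 t^{n-1}(1-t)^s\,dt=Cn\,B(n,s+1). \]
Since $n\,B(n,s+1)=\Gamma(s+1)\,\Gamma(n+1)/\Gamma(n+s+1)\approx n^{-s}$ by Stirling's formula, this yields $\sup_{n\ge1}(1+n)^s\mu_n<\infty$, and the case $n=0$ is immediate since $\mu_0=\mu([0,1))<\infty$. (Alternatively the integration by parts can be bypassed by a dyadic decomposition of $[0,1)$ into the rings $[1-2^{-k},1-2^{-k-1})$, on which $\mu$ has mass $\lesssim 2^{-ks}$ and $t^n\le e^{-n2^{-k-1}}$, and then summing the resulting convergent series.)

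Conversely, suppose $M:=\sup_{n\ge0}(1+n)^s\mu_n<\infty$. For any $t\in[0,1)$ and any $n\ge0$, restricting the defining integral to $[t,1)$ gives the elementary bound $t^n\,\mu([t,1))\le\mu_n$, hence $\mu([t,1))\le M/\big((1+n)^s t^n\big)$. For $t\in[1/2,1)$ choose $n=\lceil 1/(1-t)\rceil$; then $(1+n)^s\ge n^s\ge(1-t)^{-s}$, while $t^n\ge t\cdot t^{1/(1-t)}$ is bounded below by an absolute constant $c_0>0$ on this range because $(1-x)^{1/x}$ is decreasing on $(0,1/2]$. This gives $\mu([t,1))\le (M/c_0)(1-t)^s$. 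For $0\le t<1/2$ one simply uses $\mu([t,1))\le\mu_0$ together with $(1-t)^s>2^{-s}$. Combining the two ranges of $t$ shows $\mu$ is an $s$-Carleson measure.

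The computations are routine; the only step demanding a little care is the calibration of the exponent $n$ in the converse, which must be matched to the scale $1-t$ so that the factor $t^n$ stays away from $0$ while $(1+n)^{-s}$ supplies exactly the required power of $1-t$. Granting Stirling's asymptotics for $\Gamma(n+1)/\Gamma(n+s+1)$, no genuine difficulty remains: the statement is the elementary duality between moment decay of order $s$ and Carleson tails of order $s$.
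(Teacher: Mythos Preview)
Your argument is correct. Note, however, that the paper does not supply its own proof of this lemma: it is simply quoted as a known result (Proposition~1 in \cite{author1}), so there is no in-paper argument to compare against. The route you take---Fubini/integration by parts to convert the tail bound $\nu(t)\le C(1-t)^s$ into the Beta-integral estimate $\mu_n\lesssim n\,B(n,s+1)\asymp n^{-s}$, and in the converse the Chebyshev-type bound $t^n\mu([t,1))\le\mu_n$ with the calibrated choice $n=\lceil 1/(1-t)\rceil$---is exactly the standard elementary proof of this moment characterization and is essentially what one finds in the cited source. The only step worth a small remark is the integration-by-parts identity $\mu_n=n\int_0^1 t^{n-1}\nu(t)\,dt$: this is cleanest via Fubini, writing $t^n=\int_0^t n s^{n-1}\,ds$ and interchanging, which avoids any ambiguity about boundary terms or atoms.
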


\begin{theorem}\label{3.6}
Suppose $0<p<2$ and $\mu$ is a positive finite Borel measure on $[0,1)$. Then $\mathcal{H}_{\mu}(H^{\infty})\subseteq\mathcal{Q}_{p}$ if and only if $\mu$ is a Carleson measure.
\end{theorem}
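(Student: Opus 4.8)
The plan is to handle the two directions separately, using Lemma \ref{lem3.2} (the coefficient characterization of $\mathcal{Q}_p$ for nonnegative-coefficient functions) as the central device, since $\mathcal{H}_\mu(f)$ has a convenient coefficient expression. For the sufficiency direction, suppose $\mu$ is a Carleson measure and let $f\in H^\infty$ with $f(z)=\sum_k a_k z^k$. Then $|a_k|\le \|f\|_\infty$ for all $k$, so by positivity of the moments it suffices to prove $\mathcal{H}_\mu(g)\in\mathcal{Q}_p$ where $g(z)=\sum_k z^k = 1/(1-z)$, i.e. to prove that $F(z):=\sum_n\big(\sum_k\mu_{n+k}\big)z^n = \sum_n b_n z^n$ lies in $\mathcal{Q}_p$, with $b_n=\int_{[0,1)}\frac{t^n}{1-t}\,d\mu(t)$. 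Note $b_n\ge 0$, so Lemma \ref{lem3.2} applies: we must bound
$$\sup_{0\le r<1}\sum_{n=0}^\infty\frac{(1-r)^p}{(n+1)^{p+1}}\Bigg(\sum_{k=0}^n (k+1)b_{k+1}(n-k+1)^{p-1}r^{n-k}\Bigg)^2<\infty.$$
Writing $b_{k+1}=\int_{[0,1)}\frac{t^{k+1}}{1-t}d\mu(t)$ and pulling the sum over $k$ inside the integral, the inner sum becomes $\int_{[0,1)}\frac{t}{1-t}\big(\sum_{k=0}^n(k+1)(tk')^{\cdots}\big)d\mu$; the Minkowski inequality recalled at the end of Section 2 lets us move the $\ell^2_n$-type norm inside the $\mu$-integral. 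One is then left with estimating, for each fixed $t$, a sum of the shape $\sum_n \frac{(1-r)^p}{(n+1)^{p+1}}\big(\sum_{k\le n}(k+1)t^{k+1}(n-k+1)^{p-1}r^{n-k}\big)^2$, which is exactly (up to the replacement $f\leftrightarrow 1/(1-t\cdot)$) the quantity controlling $\|1/(1-t z)\|_{\mathcal{Q}_p}^2$; standard estimates (or Lemma \ref{lem3.3}) give this is $\lesssim (1-t)^{-p}$ when $0<p<2$, and then $\int_{[0,1)}(1-t)^{-1}\cdot(1-t)^{-p/2}\,d\mu(t)$ — more precisely the correct power — is finite precisely because $\mu$ is an $s$-Carleson measure with the relevant $s$, via Lemma \ref{lem3.1} or Lemma \ref{3.5}. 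The bookkeeping of exponents here is the part demanding care; the restriction $p<2$ enters to keep the kernel exponent below the threshold $2+s$ in Lemma \ref{lem3.3}.

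For the necessity direction, assume $\mathcal{H}_\mu(H^\infty)\subseteq\mathcal{Q}_p$. Testing on the single function $f\equiv 1$ (so $a_0=1$, $a_k=0$ for $k\ge1$) gives that $\sum_n\mu_n z^n\in\mathcal{Q}_p$; but this alone is too weak. Instead I would test on $f(z)=1/(1-z)=\sum_k z^k\notin H^\infty$ — so this does not work directly — and therefore the right move is to use a closed-graph / uniform-boundedness argument: the inclusion $\mathcal{H}_\mu(H^\infty)\subseteq\mathcal{Q}_p$ together with the fact that $\mathcal{H}_\mu:H^\infty\to\mathcal{Q}_p$ has closed graph (its coefficients depend continuously on $f$) yields a uniform bound $\|\mathcal{H}_\mu(f)\|_{\mathcal{Q}_p}\lesssim\|f\|_\infty$. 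Applying this to the normalized test functions $f_a(z)=\frac{1-|a|^2}{(1-\bar a z)}$ (or a suitable power), which are uniformly bounded in $H^\infty$, and examining $\|\mathcal{H}_\mu(f_a)\|_{\mathcal{Q}_p}$ as $|a|\to1$ — again via the coefficient formula in Lemma \ref{lem3.2} together with the moment characterization in Lemma \ref{3.5} and the equivalences in Lemma \ref{lem3.1} — forces $\sup_n (n+1)\mu_n<\infty$, which by Lemma \ref{3.5} with $s=1$ is exactly the statement that $\mu$ is a Carleson measure.

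The main obstacle I anticipate is the sufficiency estimate: correctly tracking the exponents so that the per-$t$ bound comes out as the right negative power of $(1-t)$, and then matching that power against the definition of $s$-Carleson measure (for the appropriate $s$ depending on $p$) through Lemma \ref{lem3.1}(ii)–(iii). A secondary subtlety is justifying the interchange of summation and integration and the application of Minkowski's inequality in the regime $p<2$, where the outer exponent is $2/1>1$ after squaring, so Minkowski applies to the $L^{2}(\mu)$-norm of the inner sums. I would organize the write-up so that the per-$t$ model computation (the $\mathcal{Q}_p$-norm of $z\mapsto 1/(1-tz)$) is isolated as its own estimate, then the measure-theoretic integration against $d\mu$ is a short final step citing Lemma \ref{lem3.1}.
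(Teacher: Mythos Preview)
Your proposal has a genuine gap in each direction.

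\textbf{Necessity.} You dismiss the test $f\equiv 1$ as ``too weak,'' but it is in fact enough. The function $\sum_n \mu_n z^n$ has \emph{decreasing nonnegative} coefficients (since $\mu_n=\int t^n\,d\mu$), and the paper's Lemma~\ref{3.4} says that for such a function, membership in any space $X$ with $\Lambda^2_{1/2}\subseteq X\subseteq\mathcal{B}$ forces $\mu_n=\mathcal{O}(1/n)$. Since $\Lambda^2_{1/2}\subseteq\mathcal{Q}_p\subseteq\mathcal{B}$ for $0<p<2$, this applies, and Lemma~\ref{3.5} then gives that $\mu$ is Carleson. No closed-graph argument or family of test functions is needed; your route via $f_a$ is more work and the details you give are not sufficient to conclude $\sup_n(n+1)\mu_n<\infty$.

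\textbf{Sufficiency.} The reduction ``$|a_k|\le\|f\|_\infty$, so it suffices to handle $g(z)=1/(1-z)$'' is not valid. First, $g\notin H^\infty$ (indeed $g\notin H^1$), and for a Carleson measure the coefficients $b_n=\int_{[0,1)}\frac{t^n}{1-t}\,d\mu(t)$ can be infinite, so $\mathcal{H}_\mu(g)$ need not even exist. Second, and more fundamentally, the $\mathcal{Q}_p$ norm is \emph{not} monotone under coefficient majorization: $|c_n|\le b_n$ with $b_n\ge 0$ does not imply $\|\sum c_n z^n\|_{\mathcal{Q}_p}\lesssim\|\sum b_n z^n\|_{\mathcal{Q}_p}$. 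Lemma~\ref{lem3.2} applies only to functions with nonnegative coefficients, but $\mathcal{H}_\mu(f)$ has coefficients $\sum_k\mu_{n+k}a_k$, which need not be nonnegative for general $f\in H^\infty$.

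The paper avoids this entirely by working with the \emph{integral} representation: since $\mu$ is Carleson, $\mathcal{H}_\mu(f)(z)=\int_{[0,1)}\frac{f(t)}{1-tz}\,d\mu(t)$ for $f\in H^1$ (Galanopoulos--Pel\'aez). Then $|f(t)|\le\|f\|_{H^\infty}$ is pulled out \emph{inside the integral}, not at the coefficient level, and one estimates
\[
\|\mathcal{H}_\mu(f)\|_{\mathcal{Q}_p}\lesssim\|f\|_{H^\infty}\sup_{a\in\mathbb{D}}\int_{[0,1)}\Bigg(\int_{\mathbb{D}}\frac{(1-|z|^2)^p(1-|a|^2)^p}{|1-tz|^4|1-\bar a z|^{2p}}\,dA(z)\Bigg)^{1/2}d\mu(t)
\]
via Minkowski, then Lemma~\ref{lem3.3} (with $r=4>2+p=2+s$ since $p<2$) and finally Lemma~\ref{lem3.1}. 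Your later paragraph gestures toward this computation, but the framing through $g=1/(1-z)$ introduces an extra factor $1/(1-t)$ that makes the final integral diverge; dropping the flawed reduction and working directly with $\mathcal{I}_\mu(f)$ fixes the argument.
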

\begin{proof}
Let $\mathcal{H}_{\mu}(H^{\infty})\subseteq\mathcal{Q}_{p}$. Take $f(z)=1\in H^{\infty}$, Then $$\mathcal{H}_{\mu}(f)(z)=\sum_{n=0}^{\infty}\mu_nz^n\in\mathcal{Q}_{p}.$$
  Bearing in mind that the mean Lipschitz space $\Lambda_{{1}/{2}}^{2}$ is contained in all the $\mathcal{Q}_p$ spaces(See \cite{author20}, Remark $4$, p.427), we get that $\Lambda_{{1}/{2}}^{2}\subseteq\mathcal{Q}_{p}\subseteq {\mathcal{B}}$. Using Lemma $3.4$, we  imply  $\mu_n=\mathcal{O}\left(\frac{1}{n}\right)$, then  Lemma $3.5$ gives that $\mu$ is a Carleson measure.\\
 \indent On the other hand, let $\mu$ be a Carleson measure and $f\in H^{\infty}\subseteq H^1$. The Proposition $1.1$ in \cite{author18} gives that
 $$\mathcal{H}_{\mu}(f)(z)=\int_{[0,1)}\frac{f(t)}{1-tz}d\mu(t),\quad f\in H^1, \quad z\in\mathbb{D}.$$
  Hence for any $z\in \mathbb{D}$,
  \begin{eqnarray*}
&&\lefteqn{\|\mathcal{H}_{\mu}(f)\|_{\mathcal{Q}_p}}\\
&&=\sup_{a\in\mathbb{D}}\left(\int_{\mathbb{D}}\left|\int_{[0,1)}\frac{tf(t)}{(1-tz)^2}d\mu(t)\right|^2(1-|\sigma_a(z)|^2)^{p}dA(z)\right)^{\frac{1}{2}}\\
&&\lesssim\|f\|_{H^{\infty}}\sup_{a\in\mathbb{D}}\left(\int_{\mathbb{D}}\left(\int_{[0,1)}\frac{1}{|1-tz|^2}d\mu(t)\right)^2(1-|\sigma_a(z)|^2)^p dA(z)\right)^{\frac{1}{2}}.\\
\end{eqnarray*}
By the  Minkowski inequality, Lemma $3.3$  and Lemma $3.1$, we get
 \begin{eqnarray*}
&&\lefteqn{\sup_{a\in\mathbb{D}}\left(\int_{\mathbb{D}}\left(\int_{[0,1)}\frac{1}{|1-tz|^2}d\mu(t)\right)^2(1-|\sigma_a(z)|^2)^pdA(z)\right)^{\frac{1}{2}}}\\
&&\leq\sup_{a\in\mathbb{D}}\int_{[0,1)}\left(\int_{\mathbb{D}}\frac{1}{|1-tz|^4}(1-|\sigma_a(z)|^2)^pdA(z)\right)^{\frac{1}{2}}d\mu(t)\\
&&\lesssim\sup_{a\in\mathbb{D}}\int_{[0,1)}(1-|a|^2)^{\frac{p}{2}}\left(\int_{\mathbb{D}}\frac{(1-|z|^2)^p}{|1-tz|^4|1-\bar{a}z|^{2p}}dA(z)\right)^{\frac{1}{2}}d\mu(t)\\
&&\lesssim\int_{[0,1)}\frac{(1-|a|^2)^{\frac{p}{2}}}{(1-t^2)^{1-\frac{p}{2}}|1-ta|^p}d\mu(t)<\infty.\\
\end{eqnarray*}
 We obtain that $\mathcal{H}_{\mu}(f)\subseteq\mathcal{Q}_{p}$. The proof is complete.
\end{proof}

\begin{lemma}\label{3.7}
Suppose $p>0$  and let $\mu$ be a positive Borel measure on $[0,1)$. Then for any given $f\in \mathcal{Q}_{p}$ and $\alpha\in \mathbb{N}^+$, the intergral $$\mathcal{I}_{\mu_{\alpha}}(f)(z)=\int_{[0,1)}\frac{f(t)}{(1-tz)^{\alpha}}d\mu(t)$$ uniformly converges on any compact subset of $\mathbb{D}$ if and only if the measure $\mu$ satisfies $\int_{[0,1)}\log\frac{2}{1-t}d\mu(t)<\infty$.
\end{lemma}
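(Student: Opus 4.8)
The plan is to prove the two implications separately. For the ``if'' direction, assume $\int_{[0,1)}\log\frac{2}{1-t}\,d\mu(t)<\infty$ and fix $f\in\mathcal{Q}_p$. Since $\mathcal{Q}_p\subseteq\mathcal{B}$ for every $p>0$, the function $f$ obeys the standard Bloch growth bound $|f(z)|\leq C\|f\|_{\mathcal{B}}\log\frac{2}{1-|z|}$ on $\mathbb{D}$, so $|f(t)|\leq C\|f\|_{\mathcal{B}}\log\frac{2}{1-t}$ for $t\in[0,1)$. Given a compact $K\subseteq\{z:|z|\leq\rho\}$ with $\rho<1$, we have $|1-tz|\geq 1-|z|\geq 1-\rho$ for $t\in[0,1)$ and $z\in K$, so the integrand of $\mathcal{I}_{\mu_\alpha}(f)$ is dominated on $K\times[0,1)$ by $\frac{C\|f\|_{\mathcal{B}}}{(1-\rho)^{\alpha}}\log\frac{2}{1-t}$, which is $\mu$-integrable by hypothesis. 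Hence $\mathcal{I}_{\mu_\alpha}(f)(z)$ converges absolutely for each $z$, and
\[
\sup_{z\in K}\left|\int_{[s,1)}\frac{f(t)}{(1-tz)^{\alpha}}\,d\mu(t)\right|\leq\frac{C\|f\|_{\mathcal{B}}}{(1-\rho)^{\alpha}}\int_{[s,1)}\log\frac{2}{1-t}\,d\mu(t)\longrightarrow 0,\qquad s\to 1^{-},
\]
which is the asserted uniform convergence on $K$; since each truncation $\int_{[0,s)}\frac{f(t)}{(1-tz)^{\alpha}}\,d\mu(t)$ is analytic in $z$, the limit $\mathcal{I}_{\mu_\alpha}(f)$ lies in $H(\mathbb{D})$.

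For the ``only if'' direction, the key step is to produce one explicit function of $\mathcal{Q}_p$ that is comparable to $\log\frac{2}{1-t}$ along $[0,1)$; I will take $g(z)=\log\frac{2}{1-z}$ and check that $g\in\mathcal{Q}_p$ for every $p>0$. For $p>1$ this is immediate from $\mathcal{Q}_p=\mathcal{B}$ together with $(1-|z|^2)|g'(z)|=\frac{1-|z|^2}{|1-z|}\leq 2$. For $0<p<2$, using $1-|\sigma_w(z)|^2=\frac{(1-|w|^2)(1-|z|^2)}{|1-\bar wz|^2}$ and $g'(z)=\frac{1}{1-z}$, one gets
\[
\|g\|_{\mathcal{Q}_p}^{2}=\sup_{w\in\mathbb{D}}(1-|w|^2)^{p}\int_{\mathbb{D}}\frac{(1-|z|^2)^{p}}{|1-z|^{2}\,|1-\bar wz|^{2p}}\,dA(z),
\]
and Lemma \ref{lem3.3} with $s=p$, $r=2$, $t=2p$ (the boundary pole at $z=1$ being handled by replacing it with a pole at $\rho\in(0,1)$ and letting $\rho\to1^{-}$, the implied constant there being independent of the location of the poles) bounds the inner integral by $\frac{C}{|1-w|^{p}}$; hence $\|g\|_{\mathcal{Q}_p}^{2}\lesssim\sup_{w\in\mathbb{D}}\bigl(\frac{1-|w|^2}{|1-w|}\bigr)^{p}\leq 2^{p}$. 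Together these two ranges cover all $p>0$. Now apply the hypothesis with $f=g$: evaluating at $z=0$ (the compact set $\{0\}$), the convergence of $\int_{[0,s)}\log\frac{2}{1-t}\,d\mu(t)$ as $s\to1^{-}$ together with positivity of the integrand forces $\int_{[0,1)}\log\frac{2}{1-t}\,d\mu(t)=\mathcal{I}_{\mu_\alpha}(g)(0)<\infty$, which is exactly the condition in the statement.

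The only genuinely delicate point I anticipate is the membership $g\in\mathcal{Q}_p$ for small $p$ (say $0<p\le 1$), where $\mathcal{Q}_p$ can be strictly smaller than BMOA and one must invoke the sharp integral estimate of Lemma \ref{lem3.3}, in particular handling the boundary singularity at $z=1$ by the limiting argument above. The remaining ingredients — the Bloch growth bound, the domination on compact subsets, and the evaluation at the single point $z=0$ — are routine.
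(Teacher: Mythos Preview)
Your proof is correct and follows essentially the same route as the paper: the ``if'' direction uses the logarithmic growth bound for $\mathcal{Q}_p$ (equivalently Bloch) functions to dominate the integrand uniformly on compact sets, and the ``only if'' direction tests against $g(z)=\log\frac{2}{1-z}\in\mathcal{Q}_p$ and evaluates at $z=0$. The only difference is that the paper simply asserts $g\in\mathcal{Q}_p$ as well known, whereas you supply a verification via Lemma~\ref{lem3.3} and a limiting argument---your Fatou-type passage from $\rho\to1^{-}$ is sound since the implied constant in Lemma~\ref{lem3.3} does not depend on the pole location.
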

\begin{proof}
The proof is similar to the proof of Theorem $2.1$ in \cite{author2}.\\
Suppose that $\mu$ satisfies $M=\int_{[0,1)}\log\frac{2}{1-t}d\mu(t)<\infty$.  It is due to the fact that any $f\in\mathcal{Q}_p$ has the growth  $$|f(z)|\lesssim\|f\|_{\mathcal{Q}_p}\log\frac{2}{1-|z|^2},\quad f \in\mathcal{Q}_p, \quad z\in \mathbb{D}.$$
Then for any $f\in\mathcal{Q}_p$,\ $\alpha\in\mathbb{N}^+$,\ $0<r<1$ and all $z$ with $|z|\leq r$, we obtain
\begin{align*}
\int_{[0,1)}\frac{|f(t)|}{|1-tz|^{\alpha}}d\mu(t)&<\frac{1}{(1-r)^{\alpha}}\int_{[0,1)}|f(t)|d\mu(t)\\
&\lesssim\|f\|_{\mathcal{Q}_p}\frac{1}{(1-r)^{\alpha}}\int_{[0,1)}\log\frac{2}{1-t^2}d\mu(t)\\
&\lesssim\|f\|_{\mathcal{Q}_p}\frac{1}{(1-r)^{\alpha}}\int_{[0,1)}\log\frac{2}{1-t}d\mu(t)\\
&=\frac{M\|f\|_{\mathcal{Q}_p}}{(1-r)^{\alpha}}.
\end{align*}
Hence the intergral $$\mathcal{I}_{\mu_{\alpha}}(f)(z)=\int_{[0,1)}\frac{f(t)}{(1-tz)^{\alpha}}d\mu(t)$$ uniformly converges on any compact subset of $\mathbb{D}$ and the resulting funtion $\mathcal{I}_{\mu_{\alpha}}$ is analytic in $\mathbb{D}$.\\
\indent
Conversely suppose that the operator $\mathcal{I}_{\mu_{\alpha}}$ is well defined in $\mathcal{Q}_{p}$. Let $f(z)=\log\frac{2}{1-t}$. It is well know that $f\in \mathcal{Q}_{p}$. Then it follows that $\mathcal{I}_{\mu_{\alpha}}(f)(z)$ is well defined for every $z\in\mathbb{D}$. In particular,
$$\mathcal{I}_{\mu_{\alpha}}(f)(0)=\int_{[0,1)}\log\frac{2}{1-t}d\mu(t)$$
is a complex number. Since $\mu$ is a positive measure and $\log\frac{2}{1-t}>0$ for all $t\in[0,1)$, we get that
$$\int_{[0,1)}\log\frac{2}{1-t}d\mu(t)<\infty.$$
The proof is complete.

\end{proof}

The following lemma is a characterization of the coefficient multipliers from $\mathcal{B}$ into $l^1$(see \cite{author19}).

\begin{lemma}\label{3.8}
A sequence $\{\lambda_n\}_{n=0}^{\infty}$ of complex number is a coefficient multiplier from $\mathcal{B}$ into $l^1$ if and only if $$\sum_{n=1}^{\infty}\left(\sum_{k=2^n+1}^{2^{n+1}}|\lambda_k|^2\right)^{\frac{1}{2}}<\infty.$$
\end{lemma}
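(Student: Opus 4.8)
The right setting is a dyadic (Littlewood--Paley) splitting of the Taylor coefficients. Put $I_0=\{0,1,2\}$ and $I_n=\{2^n+1,\dots,2^{n+1}\}$ for $n\ge1$, so $\mathbb{N}\cup\{0\}=\bigsqcup_{n\ge0}I_n$; for $f(z)=\sum_k a_kz^k$ write $\Delta_nf(z)=\sum_{k\in I_n}a_kz^k$ and $c_n=\bigl(\sum_{k\in I_n}|\lambda_k|^2\bigr)^{1/2}$, so that the condition in the statement is exactly $\sum_{n\ge1}c_n<\infty$. I would first record two facts about $\mathcal{B}$. \emph{(a)} If $f\in\mathcal{B}$ then $\|\Delta_nf\|_{H^2}=\bigl(\sum_{k\in I_n}|a_k|^2\bigr)^{1/2}\lesssim\|f\|_{\mathcal{B}}$ uniformly in $n$: since $k>2^n$ on $I_n$ one has $|a_k|=|\widehat{f'}(k-1)|/k\le 2^{-n}|\widehat{f'}(k-1)|$, and then $M_2(r,f')\le M_\infty(r,f')\le\|f\|_{\mathcal{B}}/(1-r^2)$ with the choice $r=1-2^{-n}$ gives the bound. \emph{(b)} If $g=\sum_nP_n$ with $P_n$ an analytic polynomial with spectrum in $I_n$ and $\sup_n\|P_n\|_{H^\infty}<\infty$, then $g\in\mathcal{B}$ and $\|g\|_{\mathcal{B}}\lesssim|\widehat g(0)|+|\widehat g(1)|+\sup_n\|P_n\|_{H^\infty}$; this is the de la Vall\'ee Poussin form of the Littlewood--Paley description of $\mathcal{B}$ (the smooth block $W_n\ast g$ involves only $P_{n-1},P_n$, and the kernels $W_n$ have uniformly bounded $L^1$ norm), cf.\ \cite{author8}.

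The \textbf{sufficiency} half is then a one-line estimate: if $\sum_nc_n<\infty$ and $f=\sum a_kz^k\in\mathcal{B}$, the Cauchy--Schwarz inequality on each block together with (a) gives
\[
\sum_{k\ge0}|\lambda_ka_k|=\sum_{n\ge0}\sum_{k\in I_n}|\lambda_ka_k|\le\sum_{n\ge0}c_n\|\Delta_nf\|_{H^2}\lesssim\|f\|_{\mathcal{B}}\sum_{n\ge0}c_n<\infty,
\]
so $\{\lambda_n\}$ maps $\mathcal{B}$ into $l^1$.

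For \textbf{necessity}, assume $\sum_nc_n=\infty$; the goal is an $f\in\mathcal{B}$ with $\{\lambda_k\widehat f(k)\}\notin l^1$. The crucial point is that $\sum_k|\lambda_k\widehat f(k)|$ sees only the \emph{moduli} $|\widehat f(k)|$, so the arguments of $\widehat f(k)$ are free and can be spent on keeping the blocks $\Delta_nf$ bounded in $H^\infty$. I would build, block by block, a polynomial $Q_n$ with spectrum in $I_n$, with $\|Q_n\|_{H^\infty}\le C$ absolute, and with $\sum_{k\in I_n}|\lambda_k|\,|\widehat{Q_n}(k)|\gtrsim c_n$; then $f=\sum_nQ_n\in\mathcal{B}$ by (b), while $\sum_k|\lambda_k\widehat f(k)|\ge\sum_n\sum_{k\in I_n}|\lambda_k|\,|\widehat{Q_n}(k)|\gtrsim\sum_nc_n=\infty$. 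To manufacture $Q_n$ one decomposes $I_n$ into the level sets $E_{n,j}=\{k\in I_n:2^{-j-1}L_n<|\lambda_k|\le2^{-j}L_n\}$, $L_n=\max_{I_n}|\lambda_k|$ (the levels with $j>2n$ carry a negligible fraction of $c_n^2$ and are discarded), on each of which $|\lambda_k|$ is essentially constant; one then takes $|\widehat{Q_n}(k)|$ constant equal to $t_j$ on $E_{n,j}$, chooses its arguments in Rudin--Shapiro fashion so that the level contributes $\lesssim t_j|E_{n,j}|^{1/2}$ to $\|Q_n\|_{H^\infty}$ and $\approx t_j|E_{n,j}|\,2^{-j}L_n$ to the weighted sum, and optimizes the (finitely many) level weights $t_j$.

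The \textbf{main obstacle} is precisely this last step: producing, on an arbitrary dyadic frequency block and its level sets, analytic polynomials with prescribed coefficient moduli whose $H^\infty$ norm stays comparable to their $H^2$ norm --- this is where Rudin--Shapiro-type flat polynomials are indispensable and where the harmonic analysis is genuinely delicate (and the reason the lemma is quoted from \cite{author19} rather than proved in place). Everything else --- facts (a) and (b) and the Cauchy--Schwarz bookkeeping --- is routine and serves only to convert that per-block estimate into the compact criterion $\sum_n\bigl(\sum_{k\in I_n}|\lambda_k|^2\bigr)^{1/2}<\infty$.
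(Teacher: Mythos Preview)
The paper does not prove Lemma~3.8 at all; it is simply quoted from Anderson--Shields \cite{author19}. So there is no ``paper's own proof'' to compare your plan against, and your closing remark --- that the lemma is cited rather than proved precisely because the necessity direction is delicate --- is exactly the position the paper takes.

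On the merits of your plan: the sufficiency half is correct and complete. Fact~(a) is right (the $M_2$--estimate with $r=1-2^{-n}$ gives $\sum_{k\in I_n}|a_k|^2\lesssim\|f\|_{\mathcal B}^2$), and Cauchy--Schwarz block by block finishes it.

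The necessity sketch, however, has a genuine gap beyond the one you flag. Even granting ``Rudin--Shapiro on each level set'' as a black box (already problematic, since the Rudin--Shapiro construction lives on intervals of consecutive integers, not on arbitrary subsets $E_{n,j}\subset I_n$), your optimization does not recover $c_n$. With your notation the triangle inequality gives $\|Q_n\|_{H^\infty}\le\sum_j t_j|E_{n,j}|^{1/2}$; maximizing $\sum_j t_j|E_{n,j}|\,2^{-j}L_n$ under $\sum_j t_j|E_{n,j}|^{1/2}\le1$ yields only $\max_j\bigl(|E_{n,j}|^{1/2}2^{-j}L_n\bigr)$, which in general is of order $c_n/\sqrt{J}$ with $J$ the number of levels retained (so $J\asymp n$). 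That would prove only $\sum_n c_n/\sqrt{n}<\infty$, strictly weaker than $\sum_n c_n<\infty$ (e.g.\ $c_n=n^{-3/4}$). What is really needed is $\|Q_n\|_{H^\infty}\lesssim\|Q_n\|_{H^2}$ for the \emph{entire} block polynomial with the prescribed moduli $|\widehat{Q_n}(k)|$ proportional to $|\lambda_k|$, not merely level by level --- and that is precisely the hard construction your outline does not supply. If you want a self-contained argument you must go into \cite{author19} itself.
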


\begin{theorem}\label{3.9}
Suppose $0<p<2$ and $\mu$ is a positive finite Borel measure on $[0,1)$ which satisfies $\int_{[0,1)}\log\frac{2}{1-t}d\mu(t)<\infty$. Then $\mathcal{DH}_{\mu}(H^{\infty})\subseteq\mathcal{Q}_{p}$ if and only if $\mu$ is a $2$-Carleson measure.
\end{theorem}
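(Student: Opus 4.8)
The plan is to mimic the structure of Theorem \ref{3.6}: prove necessity by testing $\mathcal{DH}_\mu$ on a simple function and invoking the coefficient characterizations (Lemma \ref{3.4}, Lemma \ref{3.5}), and prove sufficiency by passing to the integral representation $\mathcal{DH}_\mu(f)(z)=(z\mathcal{H}_\mu(f)(z))'$, then estimating the $\mathcal{Q}_p$ seminorm via Minkowski's inequality, Lemma \ref{lem3.3} and Lemma \ref{lem3.1}.

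For necessity, take $f(z)=1\in H^\infty$. Then $\mathcal{DH}_\mu(f)(z)=\sum_{n=0}^\infty (n+1)\mu_n z^n$, which has nonnegative decreasing Taylor coefficients (since $\mu_n$ is decreasing, though one must check $(n+1)\mu_n$ is eventually decreasing — this needs the $2$-Carleson-type decay, so a little care is required here, or one works with the sequence restricted to large $n$). Since this function lies in $\mathcal{Q}_p\subseteq\mathcal{B}$ and $\Lambda^2_{1/2}\subseteq\mathcal{Q}_p$, Lemma \ref{3.4} forces $(n+1)\mu_n=\mathcal{O}(1/n)$, i.e. $\mu_n=\mathcal{O}(1/n^2)$, and then Lemma \ref{3.5} with $s=2$ gives that $\mu$ is a $2$-Carleson measure. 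Alternatively, and perhaps more cleanly, one can use Lemma \ref{lem3.2} directly with the sequence $a_n=(n+1)\mu_n$.

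For sufficiency, assume $\mu$ is a $2$-Carleson measure; note a $2$-Carleson measure on $[0,1)$ automatically satisfies $\int_{[0,1)}\log\frac{2}{1-t}\,d\mu(t)<\infty$, so by Lemma \ref{3.7} the operator is well defined. For $f\in H^\infty$, using $\mathcal{H}_\mu(f)(z)=\int_{[0,1)}\frac{f(t)}{1-tz}\,d\mu(t)$ one computes
$$\mathcal{DH}_\mu(f)(z)=\bigl(z\mathcal{H}_\mu(f)(z)\bigr)'=\int_{[0,1)}\frac{f(t)}{(1-tz)^2}\,d\mu(t)=\mathcal{I}_{\mu_2}(f)(z),$$
so $\mathcal{DH}_\mu(f)'(z)=\int_{[0,1)}\frac{2tf(t)}{(1-tz)^3}\,d\mu(t)$. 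Then
$$\|\mathcal{DH}_\mu(f)\|_{\mathcal{Q}_p}\lesssim\|f\|_{H^\infty}\sup_{a\in\mathbb{D}}\left(\int_{\mathbb{D}}\left(\int_{[0,1)}\frac{d\mu(t)}{|1-tz|^3}\right)^2(1-|\sigma_a(z)|^2)^p\,dA(z)\right)^{1/2}.$$
Applying Minkowski's inequality to bring the $\mu$-integral outside, the inner integral becomes $\int_{\mathbb{D}}\frac{(1-|z|^2)^p}{|1-tz|^6|1-\bar a z|^{2p}}\,dA(z)$ (after using $1-|\sigma_a(z)|^2=\frac{(1-|a|^2)(1-|z|^2)}{|1-\bar a z|^2}$). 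Since $0<p<2$, we have $6>2+p$, so the second part of Lemma \ref{lem3.3} applies with $r=6$, $s=p$, $t=2p$, giving a bound $\lesssim \frac{(1-t^2)^{p-4}}{|1-ta|^{2p}}$; taking the square root and reinserting the factor $(1-|a|^2)^{p/2}$ yields an integrand of the form $\frac{(1-|a|^2)^{p/2}}{(1-t^2)^{2-p/2}|1-ta|^p}\,d\mu(t)$, which is exactly the type of expression that Lemma \ref{lem3.1} controls for a $2$-Carleson measure (take $s=2$, and appropriate $r,t$ with $r=2-p/2$, $s+t-r=p$, so $t=p/2$; one checks $r<s$, i.e. $2-p/2<2$, holds). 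Hence the supremum is finite and $\mathcal{DH}_\mu(f)\in\mathcal{Q}_p$.

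The main obstacle I anticipate is bookkeeping the exponents so that both Lemma \ref{lem3.3} (which requires $t<2+s<r$ for the relevant estimate, i.e. $2p<2+p<6$, valid exactly because $p<2$) and Lemma \ref{lem3.1} (which requires $0\le r<s$, forcing $2-p/2<2$) apply simultaneously; the boundary cases $p\to 2$ are precisely where these degenerate, which is consistent with the hypothesis $p<2$. A secondary subtlety is the monotonicity issue in the necessity direction: strictly speaking $(n+1)\mu_n$ need not be decreasing for small $n$, so one should either argue with tails or replace Lemma \ref{3.4} by Lemma \ref{lem3.2} applied to the coefficient sequence of $\mathcal{DH}_\mu(1)$ directly.
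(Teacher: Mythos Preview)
Your sufficiency argument is essentially the paper's: identify $\mathcal{DH}_\mu(f)$ with $\mathcal{I}_{\mu_2}(f)$, differentiate, bound $|f(t)|\le\|f\|_{H^\infty}$, apply Minkowski, then Lemma~\ref{lem3.3} with $(s,r,t)=(p,6,2p)$ (valid since $2p<2+p<6$ for $0<p<2$), and finally Lemma~\ref{lem3.1} with $s=2$, $r=2-\tfrac{p}{2}$, $t=\tfrac{p}{2}$. Your exponent bookkeeping is correct. The paper spends a bit more effort justifying the identity $\mathcal{DH}_\mu(f)=\mathcal{I}_{\mu_2}(f)$ for $f\in H^\infty$ via Lemma~\ref{3.8} (showing $\{\mu_n\}$ is a $\mathcal{B}\to\ell^1$ multiplier so that the double series can be rearranged), whereas you go through $\mathcal{H}_\mu(f)=\mathcal{I}_\mu(f)$ and differentiate; either route is fine.

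For necessity, your primary approach via Lemma~\ref{3.4} has exactly the gap you flagged, and it is a genuine one: Lemma~\ref{3.4} needs the coefficient sequence to be decreasing, but there is no a~priori reason for $(n+1)\mu_n$ to be eventually decreasing---that would essentially presuppose the $2$-Carleson decay you are trying to prove, so the ``tails'' fix is circular. The paper does take your alternative route through Lemma~\ref{lem3.2}, but this requires a concrete computation you have not supplied: from the Lemma~\ref{lem3.2} condition with $a_n=(n+1)\mu_n$ one restricts the outer sum to indices $4n$, restricts the inner sum to $n\le k\le 2n$, replaces $\mu_{k+1}=\int_{[0,1)}t^{k+1}\,d\mu(t)$ by $\int_r^1 t^{k+1}\,d\mu(t)\ge r^{2n+1}\mu([r,1))$, and after summing the remaining powers of $n$ arrives at a lower bound $\gtrsim \mu([r,1))^2/(1-r)^4$ uniformly in $r$. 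This chain is where the actual work in the necessity direction lies, so your proposal should at least sketch it rather than invoke Lemma~\ref{lem3.2} as a black box.
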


\begin{proof}
Suppose $\mathcal{DH}_{\mu}(H^{\infty})\subseteq\mathcal{Q}_{p}$. Take $f(z)=1\in H^{\infty}$, then
$$\mathcal{DH}_{\mu}(f)(z)=\sum_{n=0}^{\infty}(n+1)\mu_{n}z^n\in\mathcal{Q}_{p}.$$
Using Lemma $3.2$, we deduce
\begin{align*}
\infty&>\sum_{n=0}^{\infty}\frac{(1-r)^p}{(n+1)^{p+1}}\left(\sum_{k=0}^{\infty}(k+2)^2\mu_{k+1}(n-k+1)^{p-1}r^{n-k}\right)^2\\
&\gtrsim\sum_{n=0}^{\infty}\frac{(1-r)^p}{(4n+1)^{p+1}}\left(\sum_{k=0}^{4n}(k+2)^2\mu_{k+1}(4n-k+1)^{p-1}r^{4n-k}\right)^2\\
&\gtrsim\sum_{n=0}^{\infty}\frac{(1-r)^p}{(4n+1)^{p+1}}\left(\sum_{k=n}^{2n}(k+2)^2\int_{r}^{1}t^{k+1}d\mu(t)(4n-k+1)^{p-1}r^{4n-k}\right)^2\\
&\gtrsim\mu^{2}([r,1))(1-r)^p\sum_{n=0}^{\infty}\frac{r^{8n+2}}{(4n+1)^{p+1}}\left(\sum_{k=n}^{2n}(k+2)^2(4n-k+1)^{p-1}\right)^2\\
&\gtrsim\mu^{2}([r,1))(1-r)^{p}\sum_{n=0}^{\infty}(4n+2)^{4+p-1}r^{8n+2}\\
&\approx\frac{\mu^{2}([r,1))}{(1-r)^4}
\end{align*}
for all $r\in[0,1)$ which yields that $\mu$ is a $2$-Carleson measure.\\
\indent Conversely suppose $\mu$ is a $2$-Carleson measure and $f=\sum_{k=0}^{\infty}a_kz^k\in H^{\infty}\in\mathcal{Q}_p$. Since $\int_{[0,1)}\log\frac{2}{1-t}d\mu(t)<\infty$, we obtain the integral$\int_{[0,1)}t^nf(t)d\mu(t)$ converges absolutely and
$$\sup_{n\geq0}\left|\int_{[0,1)}t^nf(t)d\mu(t)\right|\leq\|f\|_{\mathcal{Q}_p}\int_{[0,1)}\log\frac{2}{1-t}d\mu(t)<\infty.$$
It follows from Lemma $3.7$ that $\int_{[0,1)}\frac{f(t)}{(1-tz)^2}d\mu(t)$ converges absolutely, then we obtain
\begin{align*}
\mathcal{I}_{\mu_2}(f)(z)&=\int_{[0,1)}\frac{f(t)}{(1-tz)^2}d\mu(t)\\
&=\int_{[0,1)}f(t)\left(\sum_{n=0}^{\infty}(n+1)t^nz^n\right)d\mu(t)\\
&=\sum_{n=0}^{\infty}(n+1)\left(\int_{[0,1)}t^nf(t)d\mu(t)\right)z^n.
\end{align*}
 Using Lemma $3.5$, we see that there exists $C>0$ such that
$$|\mu_n|\leq\frac{C}{n^2},\quad n>0.$$
Then it follows that
$$\sum_{n=1}^{\infty}\left(\sum_{k=2^n+1}^{2^{n+1}}|\mu_k|^2\right)^{\frac{1}{2}}\lesssim \sum_{n=1}^{\infty}\left(\sum_{k=2^n+1}^{2^{n+1}}\frac{1}{k^4}\right)^{\frac{1}{2}}\lesssim\sum_{n=1}^{\infty}\frac{1}{2^{\frac{3n+4}{2}}}<\infty$$
these together with Lemma $3.8$ and $\mathcal{Q}_p\subseteq \mathcal{B}$, we obtain that the sequence of moments $\{\mu_n\}_{n=0}^{\infty}$ is a multiplier from $\mathcal{Q}_p$ to $l^1$. Then, there exists a constant $C>0$ such that
$$\sum_{k=0}^{\infty}|\mu_{n,k}a_k|\leq\sum_{k=0}^{\infty}|\mu_{k}a_k|\leq C\|f\|_{\mathcal{Q}_p},$$
which implies that $\mathcal{DH}_{\mu}(f)$ is a well defined function in $\mathbb{D}$ and
$$\sum_{k=0}^{\infty}\mu_{n,k}a_k=\sum_{k=0}^{\infty}a_k\int_{[0,1)}t^{n+k}d\mu(t)=\int_{[0,1)}t^nf(t)d\mu(t).$$
It follows that
\begin{align*}
\mathcal{I}_{\mu_2}(f)&=\sum_{n=0}^{\infty}(n+1)\left(\int_{[0,1)}t^nf(t)d\mu(t)\right)z^n\\
&=\sum_{n=0}^{\infty}\left(\sum_{k=0}^{\infty}\mu_{n,k}a_k\right)(n+1)z^n\\
&=\mathcal{DH}_{\mu}(f).
\end{align*}

Since $$\mathcal{DH}_{\mu}(f)(z)=\int_{[0,1)}\frac{f(t)}{(1-tz)^2}d\mu(t),\quad z\in \mathbb{D}.$$
Hence for any $z\in \mathbb{D}$,
\begin{eqnarray*}
&&\lefteqn{\|\mathcal{DH}_{\mu}(f)\|_{\mathcal{Q}_p}}\\
&&=\sup_{a\in\mathbb{D}}\left(\int_{\mathbb{D}}\left|\int_{[0,1)}\frac{2tf(t)}{(1-tz)^3}d\mu(t)\right|^2(1-|\sigma_a(z)|^2)^{p}dA(z)\right)^{\frac{1}{2}}\\
&&\lesssim\|f\|_{H^{\infty}}\sup_{a\in\mathbb{D}}\left(\int_{\mathbb{D}}\left(\int_{[0,1)}\frac{1}{|1-tz|^3}d\mu(t)\right)^2(1-|\sigma_a(z)|^2)^p dA(z)\right)^{\frac{1}{2}}.\\
\end{eqnarray*}
It follows from the Minkowski inequality, Lemma $3.3$  and Lemma $3.1$ that
\begin{eqnarray*}
&&\lefteqn{\sup_{a\in\mathbb{D}}\left(\int_{\mathbb{D}}\left(\int_{[0,1)}\frac{1}{|1-tz|^3}d\mu(t)\right)^2(1-|\sigma_a(z)|^2)^pdA(z)\right)^{\frac{1}{2}}}\\
&&\leq\sup_{a\in\mathbb{D}}\int_{[0,1)}\left(\int_{\mathbb{D}}\frac{1}{|1-tz|^6}(1-|\sigma_a(z)|^2)^pdA(z)\right)^{\frac{1}{2}}d\mu(t)\\
&&\lesssim(1-|a|^2)^{\frac{p}{2}}\sup_{a\in\mathbb{D}}\int_{[0,1)}\left(\int_{\mathbb{D}}\frac{(1-|z|^2)^p}{|1-tz|^6|1-\bar{a}z|^{2p}}dA(z)\right)^{\frac{1}{2}}d\mu(t)\\
&&\lesssim\int_{[0,1)}\frac{(1-|a|^2)^{\frac{p}{2}}}{(1-t^2)^{2-\frac{p}{2}}|1-ta|^p}d\mu(t)<\infty.\\
\end{eqnarray*}
From the above proof, we get that $\mathcal{DH}_{\mu}(H^{\infty})\subseteq\mathcal{Q}_{p}$. The proof is complete.
\end{proof}

\section*{Acknowledgments }
The authors thank the referee for useful remarks and comments that led to the improvement of this paper.
The first author was partially supported by Guangdong Basic and Applied Basic Research Foundation (No. 2020B1515310001).

\end{document}